\newtheorem{thm}{Theorem}[section]
\newtheorem{lem}[thm]{Lemma}
\numberwithin{equation}{section}
\begin{document}

\title{A Class of Permutation Binomials over Finite Fields}

\author[Xiang-dong Hou]{Xiang-dong Hou*}
\address{Department of Mathematics and Statistics,
University of South Florida, Tampa, FL 33620}
\email{xhou@usf.edu}
\thanks{* Research partially supported by NSA Grant H98230-12-1-0245.}

\keywords{finite field, hypergeometric sum, permutation polynomial}

\subjclass[2000]{11T06, 11T55, 33C05}

\begin{abstract}
Let $q>2$ be a prime power and $f={\tt x}^{q-2}+t{\tt x}^{q^2-q-1}$, where $t\in\Bbb F_q^*$. It was recently conjectured that $f$ is a permutation polynomial of $\Bbb F_{q^2}$ if and only if one of the following holds: (i) $t=1$, $q\equiv 1\pmod 4$; (ii) $t=-3$, $q\equiv \pm1\pmod{12}$; (iii) $t=3$, $q\equiv -1\pmod 6$. We confirm this conjecture in the present paper.
\end{abstract}

\maketitle

%%%%%%%%%%%%%%%%%%%%%%%%%%%%%%%%%%%%%%%%%%%%%%%%%%%%%%%%%%%%%%%%%%%%%%%%%
%             section 1 
%%%%%%%%%%%%%%%%%%%%%%%%%%%%%%%%%%%%%%%%%%%%%%%%%%%%%%%%%%%%%%%%%%%%%%%%%
\section{Introduction}

Let $q$ be a prime power and $\Bbb F_q$ the finite field with $q$ elements. A polynomial $f\in\Bbb F_q[{\tt x}]$ is called a {\em permutation polynomial} (PP) of $\Bbb F_q$ if the mapping $x\mapsto f(x)$ is a permutation of $\Bbb F_q$. Nontrivial PPs in simple algebraic forms are rare. Such PPs are sometimes the result of the mysterious interplay between the algebraic and combinatorial structures of the finite field. Permutation {\em binomials} over finite fields are particularly interesting for this reason, and they have attracted the attention of many researchers over decades; see 
\cite{Car62, MPW, Mas-Zie09, Nie-Rob82, Tur88, Wan87, Wan94}. In these references, the reader will find not only many interesting results on permutation binomials but also plenty challenges that remain.

The main result of the present paper is the following theorem:

\begin{thm}\label{T1.1}
Let $f={\tt x}^{q-2}+t{\tt x}^{q^2-q-1}\in\Bbb F_q[{\tt x}]$, where $q>2$ and $t\in\Bbb F_q^*$. Then $f$ is a PP of $\Bbb F_{q^2}$ if and only if one of the following occurs: 
\begin{itemize}
  \item [(i)] $t=1$, $q\equiv 1\pmod 4$; 
  \item [(ii)] $t=-3$, $q\equiv \pm1\pmod{12}$; 
  \item [(iii)] $t=3$, $q\equiv -1\pmod 6$.
\end{itemize} 
\end{thm}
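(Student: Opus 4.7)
The plan is to parametrize $\mathbb{F}_{q^2}^*$ by pairs $(x,y)$ with $y=x^{q-1}\in\mu_{q+1}$. Since $x^{q^2-q-1}=x^{-q}=x^{-1}y^{-1}$ and $x^{q-2}=x^{-1}y$, for $x\neq 0$ one obtains
\[
f(x)=\frac{y^2+t}{xy}.
\]
Because $y^q=y^{-1}$ on $\mu_{q+1}$, the sum $u:=y+y^{-1}$ lies in $\mathbb{F}_q$; this is what allows the subsequent reduction from character sums over $\mathbb{F}_{q^2}$ to character sums over the smaller field $\mathbb{F}_q$.

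\textbf{Step 1 (unique zero).} $f(x)=0$ with $x\neq 0$ forces $y^2=-t$ in $\mu_{q+1}$. Since $-t\in\mathbb{F}_q^*$, this requires $-t\in\mu_{q+1}$, i.e., $t=\pm 1$. The case $t=-1$ always yields zeros (take $y=1$) and is thus excluded from being a PP. The case $t=1$ survives precisely when $-1$ is a non-square in $\mu_{q+1}$, i.e., when $q\equiv 1\pmod 4$, accounting for (i).

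\textbf{Step 2 (reduction of injectivity).} Setting $f(x_1)=f(x_2)$ with $x_1\neq x_2$ and $y_i=x_i^{q-1}$, I would clear denominators and apply Frobenius to produce two polynomial equations in $(x_1,x_2,y_1,y_2)$. Eliminating the $x_i$ using $x_i^{q+1}=N(x_i)\in\mathbb{F}_q$ should reduce the system to a curve equation in $(u_1,u_2)\in\mathbb{F}_q^2$ with coefficients in $\mathbb{Z}[t]$, together with a compatibility condition for lifting a solution back to a pair $(x_1,x_2)\in\mathbb{F}_{q^2}^2$. The number of nontrivial collisions then becomes a sum over $\mathbb{F}_q$ indexed by $(u_1,u_2)$.

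\textbf{Step 3 (counting via hypergeometric sums).} Expand this sum using multiplicative characters of $\mathbb{F}_q$; by the standard dictionary, it rewrites as a hypergeometric sum in the parameter $t$ (of $_2F_1$ or $_3F_2$ type in Greene's sense). The PP condition becomes the exact vanishing of this sum, and transformation formulas should identify the values of $t$ and congruence classes of $q$ under which it vanishes.

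\textbf{Main obstacle.} The central difficulty is Step 3: Weil-type bounds handle only asymptotically large $q$, but the conjecture asserts an equivalence for every $q>2$, including small ones, and must rule out all $t\notin\{1,\pm 3\}$. Producing a closed-form reduction that vanishes precisely at $t=1$ (with $q\equiv 1\pmod 4$), $t=-3$ (with $q\equiv\pm 1\pmod{12}$), and $t=3$ (with $q\equiv -1\pmod 6$) is the heart of the argument. Sufficiency in each of these cases should admit a shorter, more direct argument factoring the collision equation explicitly, exploiting, for the $t=\pm 3$ cases, that $\mathbb{F}_q$ contains a primitive cube root of unity iff $q\equiv 1\pmod 3$; the harder direction is necessity, for which the hypergeometric evaluations appear unavoidable.
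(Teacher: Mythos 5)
Your proposal is a program, not a proof, and the essential steps are left undone. Step~1 is correct as far as it goes: writing $f(x)=(y^2+t)/(xy)$ with $y=x^{q-1}\in\mu_{q+1}$ does show that $f$ has a nonzero root iff $-t$ is a square in $\mu_{q+1}$, which excludes $t=-1$ and the pair ($t=1$, $q\equiv 3\pmod 4$). But this only controls the fiber over $0$; it neither establishes that $f$ is a PP in case (i) nor rules out any $t\notin\{\pm1\}$, so it does not ``account for (i)'' and contributes nothing toward necessity for the $t=\pm3$ cases. Steps~2 and~3 are where the theorem actually lives, and there you derive no explicit curve equation, no explicit character sum, and no identification of the vanishing locus; you yourself flag Step~3 as the ``central difficulty'' and the ``heart of the argument.'' That concession is accurate: as written, the proposal rests on the hope that classical ${}_2F_1$ transformation formulas (in Greene's finite-field sense) will force the sum to vanish exactly at $t=1,\pm3$ with the stated congruences, and there is no evidence offered that they do.

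For comparison, the paper avoids collision-counting and Weil-type estimates entirely (so your worry about asymptotic bounds is moot in its approach): it applies the exact power-sum criterion $\sum_{x}g(x)^s=0$ for $1\le s\le q^2-2$, computes $\sum_{x\in\Bbb F_{q^2}^*}f(x)^{\alpha+\beta q}$ in closed binomial form (Lemma~\ref{L3.1}, using a $p$-adic lifting of binomial coefficients, Lemma~\ref{L3.0}), reads off the necessary conditions from the single exponent $\alpha=1$, $\beta=q-2$ (which yields $(t+\epsilon)(t-3\epsilon)=0$ with $\epsilon=(-1)^{(q+1)/2}\eta(t)$ --- this is the step your Steps~2--3 never reach), and then proves sufficiency via the new identity of Theorem~\ref{T1.2}. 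That identity is established by Zeilberger's algorithm, and the paper remarks that the two sums involved admit \emph{no} hypergeometric closed form; so the final reduction is genuinely not a known transformation formula, and your plan is missing exactly this ingredient. To salvage your route you would need either to carry out the elimination in Step~2 to an explicit equation over $\Bbb F_q$ and evaluate the resulting character sum exactly for all $q$, or to switch to an exact criterion as the paper does; in either case some substitute for Theorem~\ref{T1.2} appears unavoidable.
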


In fact, Theorem~\ref{T1.1} was conjectured in \cite{FHL}; it originated from a recent study of certain permutation polynomials over finite fields defined by a functional equation. We will briefly describe this connection in Section 4.

The attempt to prove Theorem~\ref{T1.1} has led to the discovery of a curious hypergeometric identity stated in Theorem~\ref{T1.2}. In return, Theorem~\ref{T1.2} clears the last hurdle in the proof of Theorem~\ref{T1.1}. 

\begin{thm}\label{T1.2}
Let $n\ge 0$ be an integer. Then we have
\begin{equation}\label{1.1}
\begin{split}
&\sum_{k\le 2n+1}\binom{2n+1}k\biggl(\prod_{j=1}^{2n+1}(6n-2k+4-2j)\biggr)(-1)^k\,3^{2k+1}\cr
+&\sum_{k\le 2n+1}\binom{2n+1}k\biggl(\prod_{j=1}^{2n+1}(6n-2k+5-2j)\Biggr)(-1)^k\,3^{2k}=0.
\end{split}
\end{equation}
\end{thm}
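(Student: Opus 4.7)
My plan is to recast the identity as the vanishing of a single complex residue and then exploit a hidden M\"obius symmetry to evaluate it.

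I first reduce (1.1). Writing $\prod_{j=1}^{2n+1}(6n-2k+4-2j)=2^{2n+1}(2n+1)!\binom{3n-k+1}{2n+1}$ and $\prod_{j=1}^{2n+1}(6n-2k+5-2j)=2^{2n+1}(2n+1)!\binom{3n-k+3/2}{2n+1}$, using $\binom{\alpha}{2n+1}=[x^{2n+1}](1+x)^\alpha$ for arbitrary $\alpha$, and then summing the resulting binomial series in $k$, I reduce (1.1) to the single coefficient statement
\[
[x^{2n+1}]\bigl\{(1+x)^n(x-8)^{2n+1}[(1+x)^{1/2}+3]\bigr\}=0.
\]
The substitution $U=(1+x)^{1/2}$ turns the bracketed expression into the polynomial $H_n(U)=U^{2n}(U-3)^{2n+1}(U+3)^{2n+2}$, and because $dx=2U\,dU$ the coefficient extraction becomes a residue:
\[
[x^{2n+1}]H_n=2\operatorname{Res}_{U=1}f(U),\qquad f(U)=\frac{U^{2n+1}(U-3)^{2n+1}(U+3)^{2n+2}}{(U-1)^{2n+2}(U+1)^{2n+2}}.
\]

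The key move is the further M\"obius substitution $U=3(1+w)/(1-w)$, which sends $\{0,3,-3,1,-1,\infty\}$ to $\{-1,0,\infty,-1/2,-2,1\}$ and conjugates the involution $U\mapsto -U$ into $w\mapsto 1/w$. A direct computation gives $f(U)\,dU=\tilde f(w)\,dw$ with
\[
\tilde f(w)=\frac{3^{6n+5}\,w^{2n+1}(1+w)^{2n+1}}{(1-w)^{2n+2}(1+2w)^{2n+2}(2+w)^{2n+2}}.
\]
Its only finite poles are $w=1,-1/2,-2$, and it vanishes to order $2n+4$ at $\infty$, so $\operatorname{Res}_{w=\infty}\tilde f=0$. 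A short calculation yields $\tilde f(1/w)=w^3\tilde f(w)$, which, via the change-of-variable formula for residues of a meromorphic $1$-form, becomes $w_0\operatorname{Res}_{w=w_0}\tilde f=-\operatorname{Res}_{w=1/w_0}\tilde f$ for every pole $w_0$.

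Applied at the fixed point $w_0=1$ this forces $\operatorname{Res}_{w=1}\tilde f=0$; applied at $w_0=-1/2$ it gives $\operatorname{Res}_{w=-1/2}\tilde f=2\operatorname{Res}_{w=-2}\tilde f$. The global residue theorem on $\mathbb P^1$ then reads
\[
0+2\operatorname{Res}_{w=-2}\tilde f+\operatorname{Res}_{w=-2}\tilde f+0=0,
\]
whence $\operatorname{Res}_{w=-2}\tilde f=0$ and so $\operatorname{Res}_{w=-1/2}\tilde f=0=\operatorname{Res}_{U=1}f(U)$, proving (1.1).

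The principal obstacle I anticipate is finding the correct M\"obius substitution. The one-parameter family $U=k(1+w)/(1-w)$ is what conjugates $U\mapsto -U$ into $w\mapsto 1/w$, and the value $k=3$ is then forced by requiring the preimages of $U=\pm 3$ (the other zeros of $f$) to be $w=0$ and $w=\infty$; this is precisely what makes the numerator of $\tilde f$ reduce to the clean $w^{2n+1}(1+w)^{2n+1}$ and lets $w\mapsto 1/w$ act cleanly on the residues. Once the substitution is in hand, the remaining bookkeeping is routine.
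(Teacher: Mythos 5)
Your reduction of \eqref{1.1} to the single coefficient identity $[x^{2n+1}]\{(1+x)^n(x-8)^{2n+1}[(1+x)^{1/2}+3]\}=0$ is correct, and your computations of $\tilde f$ and of the functional equation $\tilde f(1/w)=w^3\tilde f(w)$ both check out. The fatal step is the passage from that functional equation to the residue relation $w_0\operatorname{Res}_{w=w_0}\tilde f=-\operatorname{Res}_{w=1/w_0}\tilde f$. What the invariance of residues of meromorphic $1$-forms actually gives is $\operatorname{Res}_{w=1/w_0}\bigl[\tilde f(w)\,dw\bigr]=\operatorname{Res}_{v=w_0}\bigl[\tilde f(1/v)\,d(1/v)\bigr]=-\operatorname{Res}_{v=w_0}\bigl[v\,\tilde f(v)\,dv\bigr]$, and $\operatorname{Res}_{w_0}[w\tilde f]$ equals $w_0\operatorname{Res}_{w_0}\tilde f$ only at a \emph{simple} pole; at a pole of order $\ge 2$ it acquires the additional term $\operatorname{Res}_{w_0}[(w-w_0)\tilde f]$, i.e.\ the coefficient of $(w-w_0)^{-2}$ in the Laurent expansion of $\tilde f$. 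Your poles have order $2n+2\ge 2$, so this term is present and does not vanish. Concretely, for $n=0$ one computes $\operatorname{Res}_{w=1}\tilde f=-3$ and $\operatorname{Res}_{w=-2}\tilde f=3$ (while $\operatorname{Res}_{w=-1/2}\tilde f=0$, as the theorem requires), so both of your derived relations --- $\operatorname{Res}_{w=1}\tilde f=0$ and $\operatorname{Res}_{w=-1/2}\tilde f=2\operatorname{Res}_{w=-2}\tilde f$ --- are false, and the global residue count $0+2R+R+0=0$ collapses.

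The corrected symmetry only produces relations that also involve the unknown $(w-w_0)^{-2}$-coefficients at the three poles, so the argument does not close without substantial new input. By contrast, the paper proves the identity mechanically: Zeilberger's algorithm produces certificates showing that $S_1(n)$ and $S_2(n)$ satisfy the same second-order recurrence $S(n+2)+24(36n^2+126n+113)S(n+1)+46656(n+1)^2(2n+3)^2S(n)=0$, and one checks $S_1(n)=-S_2(n)$ at $n=0,1$. Your coefficient-extraction reformulation is genuinely nice and might be salvageable by other means (e.g.\ feeding it back into a creative-telescoping or contiguous-relation argument), but the M\"obius-symmetry step as written is not a proof.
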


The proofs of Theorems~\ref{T1.2} and \ref{T1.1} are given in Sections 2 and 3, respectively.

\medskip

\noindent{\bf Remark.} For $q>2$, we can write the polynomial $f$ as $f\equiv {\tt x}^{q^2-2}h({\tt x}^{q-1})\pmod{{\tt x}^{q^2}-{\tt x}}$, where $h({\tt x})={\tt x}+t{\tt x}^q$. According to \cite[Lemma 2.1]{Zie09}, $f$ is a PP of $\Bbb F_{q^2}$ if and only if ${\tt x}^{q^2-2}h({\tt x})^{q-1}$ permutes the $(q-1)$st powers in $\Bbb F_{q^2}^*$. This observation, though interesting in its own right, does not seem to be useful in our approach.

%%%%%%%%%%%%%%%%%%%%%%%%%%%%%%%%%%%%%%%%%%%%%%%%%%%%%%%%%%%%%%%%%%%%%%%%%
%             section 2
%%%%%%%%%%%%%%%%%%%%%%%%%%%%%%%%%%%%%%%%%%%%%%%%%%%%%%%%%%%%%%%%%%%%%%%%%
\section{Proof of Theorem~\ref{T1.2}}

Let 
\[
\begin{split}
F_1(n,k)=\,&\binom{2n+1}k\biggl(\prod_{j=1}^{2n+1}(6n-2k+4-2j)\biggr)(-1)^k\,3^{2k+1},\cr
F_2(n,k)=\,&\binom{2n+1}k\biggl(\prod_{j=1}^{2n+1}(6n-2k+5-2j)\biggr)(-1)^k\,3^{2k},\cr
S_1(n)=\,&\sum_kF_1(n,k),\cr
S_2(n)=\,&\sum_kF_2(n,k).
\end{split}
\]
Using Zeilberger's algorithm \cite[Ch.\;6]{A=B}, \cite{Pau-Sch95}, we find that
\[
\begin{split}
&F_1(n+2,k)+24(36n^2+126n+113)F_1(n+1,k)+46656(n+1)^2(2n+3)^2F_1(n,k)\cr
=\,&G_1(n,k+1)-G_1(n,k),
\end{split}
\]
where $G_1(n,k)=F_1(n,k)R_1(n,k)$,
\[
\begin{split}
&R_1(n,k)\cr
=\,&-\frac{32k(3n-k+2)}{(n-k+1)(n-k+2)\prod_{j=2}^5(2n-k+j)}\cdot\bigl(264240-321108k +142242k^2 \cr
&-27228k^3+1902k^4+1434774n-1559605kn +612100k^2n-102647k^3n \cr
&+6194 k^4n+3361281n^2-3199801kn^2 +1081204k^2n^2-152528k^3n^2 \cr
&+7484k^4n^2 +4437783n^3-3594830kn^3+1003340k^2n^3-111631k^3n^3\cr
&+3976k^4n^3 +3611829n^4-2388503kn^4+515900k^2n^4-40234k^3n^4\cr
&+784k^4n^4 +1855833n^5-938595kn^5+139350k^2n^5-5712k^3n^5 \cr
&+587970n^6 -201978kn^6+15444k^2n^6+105030n^7-18360kn^7+8100n^8\bigr).
\end{split}
\]
By the same algorithm, we have
\[
\begin{split}
&F_2(n+2,k)+24(36n^2+126n+113)F_2(n+1,k)+46656(n+1)^2(2n+3)^2F_2(n,k)\cr
=\,&G_2(n,k+1)-G_2(n,k),
\end{split}
\]
where $G_2(n,k)=F_2(n,k)R_2(n,k)$,
\[
\begin{split}
&R_2(n,k)\cr
=\,&-\frac{4k(6n-2k+5)}{(2n-2k+3)(2n-2k+5)\prod_{j=2}^5(2n-k+j)}\cdot\bigl(5518665-6111039k\cr
&+2516532k^2-455172k^3+30432k^4+29095596n-29034593kn +10674112k^2n \cr
&-1703836k^3n+99104 k^4n+66125967n^2-58228898kn^2 +18571132k^2n^2 \cr
&-2512456k^3n^2+119744k^4n^2+84611256n^3 -63891952kn^3+16960112k^2n^3 \cr
&-1823312k^3n^3+63616k^4n^3 +66666108n^4-41422240kn^4+8573312k^2n^4 \cr
&-650912k^3n^4+12544k^4n^4 +33120768n^5-15865680kn^5+2273856k^2n^5 \cr
&-91392k^3n^5 +10132560n^6-3323808kn^6+247104k^2n^6+1745280n^7 \cr
&-293760kn^7+129600n^8\bigr).
\end{split}
\]
Therefore, both $S_1(n)$ and $S_2(n)$ satisfy the same second order recurrence relation:
\begin{gather*}
S_1(n+2)+24(36n^2+126n+113)S_1(n+1)+46656(n+1)^2(2n+3)^2S_1(n)=0,\\
S_2(n+2)+24(36n^2+126n+113)S_2(n+1)+46656(n+1)^2(2n+3)^2S_2(n)=0.
\end{gather*}
It is easy to check that
\[
\begin{split}
&S_1(0)=6 =-S_2(0),\cr
&S_1(1)=-3312 =-S_2(1).
\end{split}
\]
Hence $S_1(n)=-S_2(n)$ for all $n\ge 0$, which completes the proof of Theorem~\ref{T1.2}.

\medskip

\noindent{\bf Remark.} The hypergeometric sums $S_1(n)$ and $S_2(n)$ cannot be expressed in closed forms (in the sense of \cite[Definition~8.1.1]{A=B}). This fact has been proved using Algorithm Hyper \cite[Ch.\;8]{A=B}.

\medskip

The identity in Theorem~\ref{T1.2} can be expressed in the standard notation of hypergeomtric series. For an integer $k\ge 0$ and an element $a$ in any ring, let $(a)_k=a(a+1)\cdots(a+k-1)$ denote the rising factorial. We have  
\[
\begin{split}
&S_1(n)\cr
=\,&\sum_k\binom{2n+1}k\biggl(\prod_{j=1}^{2n+1}\bigl(6n-2k+4-2j\bigr)\biggr)(-1)^k3^{2k+1}\cr
=\,&\sum_k\binom{2n+1}k\biggl(\prod_{j=1}^{2n+1}\bigl(6n-2(2n+1-k)+4-2j\bigr)\biggr)(-1)^{2n+1-k}3^{2(2n+1-k)+1}\cr
&\kern8cm (k\mapsto 2n+1-k)\cr
=\,&-2^{2n+1}\cdot 3^{4n+3}\sum_k\binom{2n+1}k\biggl(\prod_{j=1}^{2n+1}(k+n+1-j)\biggr)(-1)^k 3^{-2k}\cr
=\,&-2^{2n+1}\cdot 3^{4n+3}\sum_{n+1\le k\le 2n+1}\binom{2n+1}k\biggl(\prod_{j=1}^{2n+1}(k+n+1-j)\biggr)(-1)^k 3^{-2k}\cr
=\,&-2^{2n+1}\cdot 3^{4n+3}\sum_{k\ge 0}\binom{2n+1}{k+n+1}\biggl(\prod_{j=1}^{2n+1}(k+2n+2-j)\biggr)(-1)^{k+n+1} 3^{-2(k+n+1)}\cr
&\kern8cm (k\mapsto k+n+1)\cr
=\,&(-1)^n\cdot 2^{2n+1}\cdot 3^{2n+1}\sum_{k\ge 0}\binom{2n+1}{k+n+1}\biggl(\prod_{j=1}^{2n+1}(k+j)\biggr)(-1)^k 3^{-2k}\cr
\end{split}
\]
In the above,
\[
\binom{2n+1}{k+n+1}=\frac{(-1)^{k+n+1}(-2n-1)_{k+n+1}}{(1)_{k+n+1}}=\frac{(-1)^{k+n+1}\,(-2n-1)_{n+1}\,(-n)_k}{(n+1)!\,(n+2)_k},
\]
\[
\prod_{j=1}^{2n+1}(k+j)=\prod_{j=1}^{2n+1}\Bigl(\frac{(j+1)_k}{(j)_k}\cdot j\Bigr)=(2n+1)!\cdot\frac{(2n+2)_k}{(1)_k}.
\]
So
\[
\begin{split}
S_1(n)\,&= - 2^{2n+1}\cdot 3^{2n+1}(-2n-1)_{n+1}\frac{(2n+1)!}{(n+1)!}\sum_{k\ge 0}\frac{(-n)_k\,(2n+2)_k}{(n+2)_k}\cdot\frac{(3^{-2})^k}{k!}\cr
&=(-1)^n\cdot 2^{2n+1}\cdot 3^{2n+1}\cdot (n+1)_{n+1}\cdot (n+2)_n\cdot{}_2F_1\biggl[\begin{matrix} -n,\;2n+2\vspace{1mm}\cr n+2\end{matrix}\biggm| 3^{-2}\biggr].
\end{split}
\]
Similarly, 
\[
\begin{split}
S_2(n)\,&=-2^{2n+1}\cdot 3^{4n+2}\sum_k\binom{2n+1}k\biggl(\prod_{j=1}^{2n+1}\Bigl(k+n+\frac 32-j\Bigr)\biggr)(-1)^k 3^{-2k}\cr
&=-2^{2n+1}\cdot 3^{4n+2}\sum_k\binom{2n+1}k\biggl(\prod_{j=-n}^n\Bigl(k+\frac 12+j\Bigr)\biggr)(-1)^k 3^{-2k}.
\end{split}
\]
In the above,
\[
\binom{2n+1}k=\frac{(-1)^k\,(-2n-1)_k}{k!},
\]
\[
\prod_{j=-n}^n\Bigl(k+\frac 12+j\Bigr)=\prod_{j=-n}^n\Bigl[\frac{(j+\frac 32)_k}{(j+\frac 12)_k}\cdot\Bigl(j+\frac 12\Bigr)\Bigr]=\Bigl(-n+\frac 12\Bigr)_{2n+1}\cdot\frac{(n+\frac 32)_k}{(-n+\frac 12)_k}.
\]
So 
\[
\begin{split}
S_2(n)\,&=-2^{2n+1}\cdot 3^{4n+2}\cdot\Bigl(-n+\frac 12\Bigr)_{2n+1}\sum_k\frac{(-2n-1)_k\,(n+\frac 32)_k}{(-n+\frac 12)_k}\cdot\frac{(3^{-2})^k}{k!}\cr
&=-2^{2n+1}\cdot 3^{4n+2}\cdot\Bigl(-n+\frac 12\Bigr)_{2n+1}\cdot{}_2F_1\biggl[\begin{matrix} n+\frac 32,\; -2n-1 \vspace{1mm}\cr -n+\frac 12\end{matrix}\biggm| 3^{-2}\biggr].
\end{split}
\]
Therefore, Theorem~\ref{T1.2} can be stated as
\[
{}_2F_1\biggl[\begin{matrix} -n,\; 2n+2 \vspace{1mm}\cr n+2\end{matrix}\biggm| 3^{-2}\biggr]=(-1)^n\,3^{2n+1}\frac{(-n+\frac 12)_{2n+1}}{(n+1)_{n+1}\,(n+2)_n}\;{}_2F_1\biggl[\begin{matrix} n+\frac 32,\;
-2n-1 \vspace{1mm}\cr -n+\frac 12\end{matrix}\biggm| 3^{-2}\biggr].
\]

%%%%%%%%%%%%%%%%%%%%%%%%%%%%%%%%%%%%%%%%%%%%%%%%%%%%
%    section 3
%%%%%%%%%%%%%%%%%%%%%%%%%%%%%%%%%%%%%%%%%%%%%%%%%%%%

\section{Proof pf Theorem~\ref{T1.1}}

Let $\Bbb Q_p$ denote the field of $p$-adic numbers and $\Bbb Z_p$ the ring of $p$-adic integers.
For an integer $a\ge 0$ and an element $z\in\Bbb Q_p$, we define $\binom za=\frac{(z-a+1)_a}{(1)_a}$. If $z\in\Bbb Q$, we also define
\[
\binom za^{\!\!*}=
\begin{cases}\displaystyle \binom za&\text{if}\ z\in\Bbb Z,\vspace{1mm}\cr
0&\text{otherwise}.
\end{cases}
\]

\begin{lem}\label{L3.0}
Let $q$ be a power of a prime $p$ and $a$ an integer with $0\le a\le q-1$. Let $z_1,z_2\in\Bbb Z_p$ such that $z_1\equiv z_2\pmod q$. Then $\binom{z_1}a\equiv \binom{z_2}a\pmod p$.
\end{lem}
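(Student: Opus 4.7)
The plan is to reduce the congruence to a single $p$-adic valuation estimate via the Vandermonde convolution. Writing $q=p^k$ and $z_1=z_2+qc$ with $c\in\mathbb{Z}_p$, the identity $\binom{x+y}{a}=\sum_{j=0}^{a}\binom{x}{a-j}\binom{y}{j}$, which holds as a polynomial identity over $\mathbb{Q}$ and hence is valid in $\mathbb{Z}_p$, yields
\[
\binom{z_1}{a}-\binom{z_2}{a}=\sum_{j=1}^{a}\binom{z_2}{a-j}\binom{qc}{j}.
\]
Since $z_2\in\mathbb{Z}_p$ forces each $\binom{z_2}{a-j}\in\mathbb{Z}_p$, it is enough to show $\binom{qc}{j}\in p\mathbb{Z}_p$ for every $1\le j\le q-1$ and every $c\in\mathbb{Z}_p$.

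For this key step I would work directly with
\[
\binom{qc}{j}=\frac{qc\,(qc-1)(qc-2)\cdots(qc-j+1)}{j!}
\]
and compute the $p$-adic valuation. For each $1\le i\le j-1\le q-2$ one has $v_p(i)\le k-1<k\le v_p(qc)$, so $v_p(qc-i)=v_p(i)$. Summing the valuations in the numerator and subtracting $v_p(j!)=v_p(j)+v_p((j-1)!)$, the factorial contributions cancel, leaving $v_p\binom{qc}{j}=k+v_p(c)-v_p(j)\ge 1$, where the last inequality follows from $j\le q-1\Rightarrow v_p(j)\le k-1$.

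Once this valuation bound is in hand, reducing the Vandermonde expansion modulo $p$ kills every $j\ge 1$ summand and the lemma follows. The main obstacle is precisely this valuation estimate: it is the only place where the hypothesis $a\le q-1$ is used, and it would break at $j=q$, where $v_p(j)=k$ would swallow the factor of $q$ in the numerator (consistent with $\binom{q}{q}=1\not\equiv 0\pmod p$, which is the obstruction to extending the lemma to $a=q$).
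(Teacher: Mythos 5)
Your proof is correct, and it takes a genuinely different route from the paper's. The paper approximates the $p$-adic integer $w$ in $z_1=z+qw$ by a sequence of ordinary integers $w_n$, identifies $\binom{z+qw_n}{a}$ as the coefficient of ${\tt x}^a$ in $(1+{\tt x})^{z+qw_n}$, invokes the Frobenius congruence $(1+{\tt x})^{qw_n}\equiv(1+{\tt x}^q)^{w_n}\pmod p$ to see that this coefficient reduces to $\binom{z}{a}$ when $a<q$, and then passes to the limit. You instead stay entirely inside $\Bbb Z_p$: the Vandermonde convolution isolates the difference $\binom{z_1}{a}-\binom{z_2}{a}$ as a sum of terms $\binom{z_2}{a-j}\binom{qc}{j}$ with $1\le j\le a$, and your explicit valuation computation $v_p\binom{qc}{j}=k+v_p(c)-v_p(j)\ge 1$ (valid since $v_p(qc-i)=v_p(i)$ for $1\le i\le j-1<q$ by the ultrametric inequality, and $v_p(j)\le k-1$ for $j\le q-1$) kills each such term modulo $p$. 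The two arguments are dual incarnations of the same fact --- your divisibility of $\binom{qc}{j}$ for $0<j<q$ is exactly what the Frobenius congruence encodes coefficientwise --- but yours buys a limit-free, purely local argument with the role of the hypothesis $a\le q-1$ made completely explicit (it fails precisely at $j=q$, as your closing remark notes), at the cost of quietly using the standard fact that $\binom{z}{m}$ maps $\Bbb Z_p$ into $\Bbb Z_p$, which is itself usually established by the very density argument the paper uses. That borrowed fact is standard enough that I would not call it a gap.
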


\begin{proof}
Write $z_1=z+qw$, where $z\in\Bbb Z$, $w\in\Bbb Z_p$. It suffices to show that $\binom{z_1}a\equiv \binom za\pmod p$. There exists a sequence $w_n\in\Bbb Z$ such that $w_n\to w$ as $n\to\infty$. We have
\[
\begin{split}
\binom{z+qw_n}a\,&=\text{the coefficient of ${\tt x}^a$ in $(1+{\tt x})^{z+qw_n}$}\cr
&\equiv\text{the coefficient of ${\tt x}^a$ in $(1+{\tt x})^z (1+{\tt x}^q)^{w_n}$} \pmod p\cr
&=\binom za\kern 6cm\text{(since $a<q$)}.
\end{split}
\]
Letting $n\to\infty$ in the above, we have $\binom{z_1}a\equiv \binom za\pmod p$.
\end{proof}

\begin{lem}\label{L3.1}
Let $f$ be as in Theorem~\ref{T1.1}. Let $0<\alpha+\beta q<q^2-1$, where $0\le \alpha,\beta\le q-1$. Then 
\begin{equation}\label{3.1}
\begin{split}
&\sum_{x\in\Bbb F_{q^2}^*}f(x)^{\alpha+\beta q}\cr
=\,&
\begin{cases}
0\kern6.9cm{\text if}\ \alpha+\beta\ne q-1, \vspace{4mm}\cr
\displaystyle -(-1)^{\frac{\alpha+q}2}t^{-\frac{3\alpha+q}2}\biggl[(-1)^{\frac{q+1}2}t^{\frac{q-1}2}\sum_i\binom\alpha i\binom{\frac{3\alpha-1}2-i}\alpha^{\!\!*} (-1)^i\,t^{2i+1} \vspace{2mm}\cr
\displaystyle +\sum_i\binom\alpha i\binom{\frac{3\alpha-1}2-i+\frac{q+1}2}\alpha^{\!\!*}  (-1)^i\,t^{2i}\biggr] \kern 1.1cm\text{if}\ \alpha+\beta=q-1.
\end{cases}
\end{split}
\end{equation}
\end{lem}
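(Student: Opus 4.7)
The plan is to compute $\sum_{x\in\Bbb F_{q^2}^*}f(x)^{\alpha+\beta q}$ by expanding $f(x)^N$ (with $N=\alpha+\beta q$) into monomials in $x$ and invoking the orthogonality $\sum_{x\in\Bbb F_{q^2}^*}x^m=-1$ if $(q^2-1)\mid m$ and $0$ otherwise. Since on $\Bbb F_{q^2}^*$ we have $x^{q^2-q-1}=x^{-q}$, I will first rewrite $f(x)=x^{-q}(x^{2(q-1)}+t)$, which gives
\[
f(x)^N=x^{-qN}\sum_{k=0}^N\binom Nk\,t^{N-k}\,x^{2(q-1)k}.
\]
Reducing the $x$-exponent modulo $q^2-1$ yields $-qN+2(q-1)k\equiv(q-1)(2k-\alpha)-(\alpha+\beta)\pmod{q^2-1}$, so a surviving $k$ forces $(q-1)\mid\alpha+\beta$. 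Under the hypothesis $0<\alpha+\beta q<q^2-1$, only $\alpha+\beta=q-1$ is admissible; otherwise the sum vanishes, establishing the first case of \eqref{3.1}. When $\alpha+\beta=q-1$ the condition simplifies to $2k\equiv\alpha+1\pmod{q+1}$ and $N=(q-1)(q-\alpha)$.

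To control $\binom Nk$ modulo $p$ I will use the factorization
\[
(1+y)^{(q-1)(q-\alpha)}=(1+y)^\alpha(1+y^q)^{q-\alpha-1}\quad\text{in}\ \Bbb F_p[y],
\]
which follows from $(1+y)^q=1+y^q$ and $(q-1)(q-\alpha)=q(q-\alpha)-(q-\alpha)$. Writing $k=i+qj$ with $0\le i,j\le q-1$, this gives $\binom Nk\equiv\binom\alpha i\binom{q-\alpha-1}{j}\pmod p$, and since $N\equiv0\pmod{q-1}$ and $t\in\Bbb F_q^*$, also $t^{N-k}=t^{-(i+j)}$. Using $q\equiv-1\pmod{q+1}$, the condition becomes $2(i-j)\equiv\alpha+1\pmod{q+1}$. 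If $q$ is odd and $\alpha$ is even there is no solution, and the two $\binom{\cdot}{\alpha}^*$ symbols in \eqref{3.1} also vanish (their upper arguments are half-integers), so both sides agree at $0$. If $\alpha$ is odd, within the admissible range $i-j\in[-(q-\alpha-1),\alpha]$ the congruence has exactly two solutions: (A)~$i-j=(\alpha+1)/2$ and (B)~$i-j=-(q-\alpha)/2$.

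For each case I will apply Lemma~\ref{L3.0} to rewrite $\binom{q-\alpha-1}{j}$ as $\binom{-\alpha-1}{j}\pmod p$ and then use the identity $\binom{-\alpha-1}{c}=(-1)^c\binom{\alpha+c}{\alpha}$. After reindexing (in case~(B) via $i\mapsto\alpha-i$), both contributions take the shape $t^{\mathrm{shift}}\sum_i(-1)^i\binom\alpha i\binom{\cdot}{\alpha}t^{2i}$; collecting the signs and powers of $t$ should identify case~(A) with the first summand of \eqref{3.1} (the one carrying $(-1)^{(q+1)/2}t^{(q-1)/2}$) and case~(B) with the second. The main technical hurdle is that the reindexing in case~(B) enlarges the summation range to $0\le i\le\alpha$, introducing spurious terms $\binom{(q+3\alpha)/2-i}{\alpha}$ with upper index $\ge q$; a second application of Lemma~\ref{L3.0} reduces each such binomial to $\binom{(3\alpha-q)/2-i}{\alpha}$, whose upper index is strictly less than $\alpha$, so the terms vanish mod $p$. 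Keeping track of this hidden cancellation together with the bookkeeping of the parity-dependent signs $(-1)^{(\alpha\pm q)/2}$ constitutes the bulk of the argument.
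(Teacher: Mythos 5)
Your proposal is correct and follows essentially the same route as the paper: the paper splits $(1+t^{-1}x^{2q-2})^{\alpha+\beta q}$ into $(1+t^{-1}x^{2(q-1)})^{\alpha}(1+t^{-1}x^{2(1-q)})^{\beta}$ via Frobenius and arrives at exactly your double sum over $(i,j)$ with $2(i-j)\equiv\alpha+1\pmod{q+1}$ and the same two solution families, whereas you expand first and split $\binom{N}{k}$ by a Lucas-type factorization --- a cosmetic difference. Your explicit handling of the out-of-range terms after invoking Lemma~\ref{L3.0} is sound; the paper deals with this implicitly, since the congruence $\binom{q-1-\alpha}{j}\equiv(-1)^{j}\binom{\alpha+j}{\alpha}\pmod p$ already forces those terms to vanish.
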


\begin{proof}
We have
\[
\begin{split}
&\sum_{x\in\Bbb F_{q^2}^*}f(x)^{\alpha+\beta q}\cr
=\,&\sum_{x\in\Bbb F_{q^2}^*}(x^{q-2}+tx^{-q})^{\alpha+\beta q}\cr
=\,&\sum_{x\in\Bbb F_{q^2}^*}t^{\alpha+\beta q}x^{-q(\alpha+\beta q)}(1+t^{-1}x^{2q-2})^{\alpha+\beta q}\cr
=\,&t^{\alpha+\beta}\sum_{x\in\Bbb F_{q^2}^*}x^{-q\alpha-\beta}(1+t^{-1}x^{2(q-1)})^\alpha (1+t^{-1}x^{2(1-q)})^\beta \cr
=\,&t^{\alpha+\beta}\sum_{x\in\Bbb F_{q^2}^*}x^{-q\alpha-\beta}\sum_{i,j}\binom \alpha i\binom\beta j t^{-(i+j)}x^{2(q-1)(i-j)}.
\end{split}
\]
This sum is $0$ unless $-q\alpha-\beta\equiv 0\pmod {q-1}$, i.e., $\alpha+\beta\equiv 0\pmod{q-1}$. 

Assume $\alpha+\beta\equiv 0\pmod{q-1}$. Since $0<\alpha+\beta q<q^2-1$, we must have $\alpha+\beta=q-1$. Then $-q\alpha-\beta=-q\alpha-(q-1-\alpha)=-(\alpha+1)(q-1)$. Thus we have 
\[
\begin{split}
&\sum_{x\in\Bbb F_{q^2}^*}f(x)^{\alpha+\beta q}\cr
=\,&\sum_{x\in\Bbb F_{q^2}^*}\sum_{i,j}\binom \alpha i\binom{q-1-\alpha}j t^{-(i+j)}x^{(q-1)(2(i-j)-\alpha-1)}\cr
=\,&-\sum_{2(i-j)\equiv\alpha+1\kern-2.5mm \pmod{q+1}}\binom \alpha i\binom{q-1-\alpha}j t^{-(i+j)}.
\end{split}
\]
When $0\le i\le\alpha$ and $0\le j\le q-1-\alpha$, we have
\[
\alpha+1-2(q+1)<2(i-j)<\alpha+1+q+1.
\]
So the condition $2(i-j)\equiv\alpha+1\pmod{q+1}$ is satisfied if and only if $2(i-j)=\alpha+1$ or $\alpha+1-(q+1)$. Hence
\begin{equation}\label{3.2}
\begin{split}
&\sum_{x\in\Bbb F_{q^2}^*}f(x)^{\alpha+\beta q}\cr
=\,&-\biggl(\sum_{i-j=\frac{\alpha+1}2}+\sum_{i-j=\frac{\alpha-q}2}\biggr)\binom \alpha i\binom{q-1-\alpha}j t^{-(i+j)}\cr
=\,&-\biggl(\sum_{i+j=\frac{\alpha-1}2}+\sum_{i+j=\frac{\alpha+q}2}\biggr)\binom \alpha i\binom{q-1-\alpha}j t^{-(\alpha-i+j)}\kern1.25cm (i\mapsto \alpha-i)\cr
=\,&-\biggl(\sum_{i+j=\frac{\alpha-1}2}+\sum_{i+j=\frac{\alpha+q}2}\biggr)\binom \alpha i\binom{j+\alpha}\alpha (-1)^j t^{-(\alpha-i+j)}\kern1cm \text{(Lemma~\ref{L3.0})}\cr
=\,&-\sum_i\binom \alpha i\binom{\frac{3\alpha-1}2-i}\alpha^{\!\!*}  (-1)^{\frac{\alpha-1}2-i}t^{-\frac{3\alpha-1}2+2i}\cr 
&-\sum_i\binom \alpha i\binom{\frac{3\alpha-1}2-i+\frac{q+1}2}\alpha^{\!\!*}  (-1)^{\frac{\alpha+q}2-i}t^{-\frac{3\alpha+q}2+2i}\cr
=\,&-(-1)^{\frac{\alpha+q}2}t^{-\frac{3\alpha+q}2}\biggl[(-1)^{\frac{q+1}2}t^{\frac{q-1}2}\sum_i\binom\alpha i\binom{\frac{3\alpha-1}2-i}\alpha^{\!\!*}  (-1)^i\,t^{2i+1}\cr
&+\sum_i\binom\alpha i\binom{\frac{3\alpha-1}2-i+\frac{q+1}2}\alpha^{\!\!*} (-1)^i\,t^{2i}\biggr].
\end{split}
\end{equation}
This completes the proof of Lemma~\ref{L3.1}.
\end{proof}

To prove Theorem~\ref{T1.1}, we will use the following criterion \cite[Lemma~7.3]{LN}: A function $g:\Bbb F_q\to\Bbb F_q$ is a permutation of $\Bbb F_q$ if and only if 
\[
\sum_{x\in\Bbb F_q}g(x)^s=
\begin{cases}
0&\text{if}\ 1\le s\le q-2,\cr
-1&\text{if}\ s=q-1.
\end{cases}
\]

\begin{proof}[Proof of Theorem~\ref{T1.1}]
($\Rightarrow$) $1^\circ$ We first show that $q$ must be odd. Otherwise, let $\alpha=1$ and $\beta=q-2$ in \eqref{3.1}, and note that the second sum at the right side does not occur since $\frac{3-1}2-i+\frac{q+1}2\notin\Bbb Z$. We have 
\[
 \sum_{x\in\Bbb F_{q^2}^*}f(x)^{1+(q-2)q}=\sum_i\binom 1i\binom{\frac{3-1}2-i}1 t^{2i}=1.
\]
Note that $f(0)=0$ since $q>2$. Thus we have $\sum_{x\in\Bbb F_{q^2}}f(x)^{1+(q-2)q}= 1\ne 0$,
which is a contradiction.

$2^\circ$ Again, let $\alpha=1$ and $\beta=q-2$ in \eqref{3.1}. We have
\[
\begin{split}
0\,&=(-1)^{\frac{q+1}2}\eta(t)\sum_i\binom 1i\binom{1-i}1(-1)^it^{2i+1}+\sum_i\binom 1i\binom{1-i+\frac{q+1}2}1(-1)^it^{2i}\cr
&=(-1)^{\frac{q+1}2}\eta(t)t+\frac 32-\frac12t^2,
\end{split}
\]
where $\eta$ is the quadratic character of $\Bbb F_q$. Let $\epsilon=(-1)^{\frac{q+1}2}\eta(t)=\pm 1$. Then the above equation becomes
\[
t^2-2\epsilon t-3=0,
\]
i.e.,
\[
(t+\epsilon)(t-3\epsilon)=0.
\]
Thus $t=-\epsilon$ or $3\epsilon$.

First assume $t=-\epsilon$. Then 
\[
\epsilon=(-1)^{\frac{q+1}2}\eta(-\epsilon)=(-1)^{\frac{q+1}2}(-\epsilon)^{\frac{q-1}2}=-\epsilon^{\frac{q-1}2}.
\]
So $\epsilon=-1$ and $q\equiv 1\pmod 4$. This is case (i).

Next assume $t=3\epsilon$. Then
\[
\epsilon=(-1)^{\frac{q+1}2}\eta(3\epsilon)=(-1)^{\frac{q+1}2}\epsilon^{\frac{q-1}2}\eta(3),
\]
i.e.,
\[
\epsilon^{\frac{q+1}2}=(-1)^{\frac{q+1}2}\eta(3).
\]
If $\epsilon=-1$, we have $\eta(3)=1$, which happens if and only if $q\equiv\pm1\pmod{12}$ \cite[\S5.2]{Ire-Ros}. This is case (ii). If $\epsilon=1$, we have $(-1)^{\frac{q+1}2}\eta(3)=1$. There are two possibilities: $\eta(3)=1$ and $(-1)^{\frac{q+1}2}=1$, or $\eta(3)=-1$ and $(-1)^{\frac{q+1}2}=-1$. The first possibility occurs if and only if $q\equiv\pm 1\pmod{12}$ and $q\equiv -1\pmod 4$, i.e., $q\equiv -1\pmod{12}$. The second possibility occurs if and only if $q\equiv\pm 5\pmod{12}$ and $q\equiv 1\pmod 4$, i.e., $q\equiv 5\pmod{12}$. Together, we have case (iii).

($\Leftarrow$) $1^\circ$ We first show that $0$ is the only root of $f$ in $\Bbb F_{q^2}$. Assume to the contrary that there exists $x\in\Bbb F_{q^2}^*$ such that $f(x)=0$. Then we have $x^{2q-2}=-t$, hence $(-t)^{\frac{q+1}2}=1$. However, this cannot be true in any of the cases (i) -- (iii).

$2^\circ$. By $1^\circ$, we have $\sum_{x\in\Bbb F_{q^2}}f(x)^{q^2-1}=-1$. Therefore, it remains to prove that 
\[
\sum_{x\in\Bbb F_{q^2}^*}f(x)^s=0\qquad\text{for}\ 1\le s\le q^2-2.
\]
Write $s=\alpha+\beta q$, where $0\le\alpha,\beta\le q-1$. By Lemma~\ref{L3.1}, it suffices to assume that $\alpha+\beta=q-1$ and $\alpha$ is odd. 

First we consider case (i) in Theorem~\ref{T1.1}. By the second line of \eqref{3.2}, we have 
\[
\begin{split}
\sum_{x\in\Bbb F_{q^2}}f(x)^{\alpha+\beta q}\,&=-\sum_{i+j=\frac{\alpha-1}2}\binom\alpha i\binom{q-1-\alpha}j-\sum_{i+j=\frac{\alpha+q}2}\binom\alpha i\binom{q-1-\alpha}j\cr
&=-\binom{q-1}{\frac{\alpha-1}2}-\binom{q-1}{\frac{\alpha+q}2}\cr
&=-(-1)^{\frac{\alpha-1}2}-(-1)^{\frac{\alpha+q}2}\cr
&=0\kern3.2cm \text{(since $q\equiv 1\kern-1mm\pmod 4$)}.
\end{split}
\]

Now we consider cases (ii) and (iii) of Theorem~\ref{T1.1}. By Lemma~\ref{L3.1}, it suffices to show that for each odd integer $\alpha$ with $0<\alpha<q-1$ we have
\[
(-1)^{\frac{q+1}2}\eta(t)\sum_i\binom\alpha i\binom{\frac{3\alpha-1}2-i}\alpha(-1)^it^{2i+1}
+\sum_i\binom\alpha i\binom{\frac{3\alpha-1}2-i+\frac{q+1}2}\alpha(-1)^it^{2i} =0.
\]
In case (ii), $t=-3$ and $(-1)^{\frac{q+1}2}\eta(t)=-1$; in case (iii), $t=3$ and $(-1)^{\frac{q+1}2}\eta(t)=1$. So it suffices to show that 
\begin{equation}\label{3.3}
\sum_i\binom\alpha i\binom{\frac{3\alpha-1}2-i}\alpha(-1)^i 3^{2i+1}
+\sum_i\binom\alpha i\binom{\frac{3\alpha-1}2-i+\frac{q+1}2}\alpha(-1)^i 3^{2i} =0.
\end{equation}
Write $\alpha=2n+1$. In $\Bbb Z_p/p\Bbb Z_p$ ($=\Bbb F_p$), the left side of \eqref{3.3} equals 
\[
\begin{split}
\frac1{\alpha!\,2^\alpha}\biggl[&\sum_i\binom{2n+1}i\biggl(\prod_{j=1}^{2n+1}(6n-2i+4-2j)\biggr)(-1)^i3^{2i+1}\cr
+\,&\sum_i\binom{2n+1}i\biggl(\prod_{j=1}^{2n+1}(6n-2i+5-2j)\biggr)(-1)^i3^{2i}\biggr].
\end{split}
\]
(Lemma~\ref{L3.0} is used to obtained the second sum in the above.)
By Theorem~\ref{T1.2}, the above expression is $0$, and we are done.
\end{proof}

\noindent{\bf Note.} For a different proof for the sufficiency of case (i) in Theorem~\ref{T1.1}, see \cite[Theorem~5.9]{FHL}. 

%%%%%%%%%%%%%%%%%%%%%%%%%%%%%%%%%%%%%%%%%%%%%%%%%%%%
%   section 4
%%%%%%%%%%%%%%%%%%%%%%%%%%%%%%%%%%%%%%%%%%%%%%%%%%%%

\section{The Polynomial $g_{n,q}$}

In this section, we briefly discuss the connection of Theorem~\ref{T1.1} to a recent study of a class of PPs defined by a functional equation.

Let $q$ be a prime power and $n\ge 0$ an integer. The functional equation 
\[
\sum_{a\in\Bbb F_q}({\tt x}+a)^n=g_{n,q}({\tt x}^q-{\tt x})
\]
defines a polynomial $g_{n,q}\in\Bbb F_p[{\tt x}]$, where $p=\text{char}\,\Bbb F_q$. Frequently, $g_{n,q}$ is a PP of $\Bbb F_{q^e}$; when this happens, the triple $(n,e;q)$ is called {\em desirable}. Many new and interesting PPs in the form of $g_{n,q}$ have been found \cite{FHL, Hou11,Hou12,HMSY}. Assume $q>2$ and $i>0$. It is known \cite[Theorem~5.9]{FHL} that
\[
g_{q^{2i}-q-1,q}({\tt x})\equiv(i-1){\tt x}^{q^2-q-1}-i{\tt x}^{q-2}\pmod{{\tt x}^{q^2}-{\tt x}}.
\]
Therefore we have the following restatement of Theorem~\ref{T1.1}:

\begin{thm}\label{T4.1}
Assume $q>2$, $i>0$, and $i\ne 0,1\pmod p$. Then $(q^{2i}-q-1,2;q)$ is desirable if and only if one of the following holds:
\begin{itemize}
  \item [(i)] $2i\equiv 1\pmod p$ and $q\equiv 1\pmod 4$;
  \item [(ii)] $2i\equiv-1\pmod p$ and $q\equiv\pm 1\pmod{12}$;
  \item [(iii)] $4i\equiv1\pmod p$ and $q\equiv-1\pmod 6$.
\end{itemize} 
\end{thm}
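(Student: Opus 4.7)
The plan is to derive Theorem~\ref{T4.1} as a short consequence of Theorem~\ref{T1.1} combined with the explicit reduction
\[
g_{q^{2i}-q-1,q}({\tt x})\equiv(i-1){\tt x}^{q^2-q-1}-i{\tt x}^{q-2}\pmod{{\tt x}^{q^2}-{\tt x}}
\]
recorded just before the statement (from \cite[Theorem~5.9]{FHL}). Since multiplication by a nonzero scalar in $\Bbb F_q$ does not affect whether a polynomial permutes $\Bbb F_{q^2}$, the first move is to put this binomial into the normalized shape appearing in Theorem~\ref{T1.1}.

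Under the standing hypothesis $i\not\equiv 0\pmod p$, I would multiply the right-hand side by $-i^{-1}$ to obtain
\[
{\tt x}^{q-2}+\frac{1-i}{i}\,{\tt x}^{q^2-q-1}.
\]
Setting $t=(1-i)/i\in\Bbb F_p$, the remaining hypothesis $i\not\equiv 1\pmod p$ is equivalent to $t\ne 0$, so Theorem~\ref{T1.1} applies and governs exactly when $g_{q^{2i}-q-1,q}$ permutes $\Bbb F_{q^2}$.

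The last step is a direct translation of the parameter. The equation $t=1$ rearranges to $2i\equiv 1\pmod p$; $t=-3$ to $2i\equiv -1\pmod p$; and $t=3$ to $4i\equiv 1\pmod p$. The accompanying conditions on $q$ in cases (i)--(iii) of Theorem~\ref{T1.1} match those listed in cases (i)--(iii) of Theorem~\ref{T4.1} verbatim, so the two theorems are equivalent given the reduction identity. The only potential pitfall I anticipate is purely bookkeeping: checking that the sign and inversion in passing from $(i-1){\tt x}^{q^2-q-1}-i{\tt x}^{q-2}$ to ${\tt x}^{q-2}+t{\tt x}^{q^2-q-1}$ yield the stated value of $t$, and confirming that the exclusions $i\not\equiv 0,1\pmod p$ are exactly what is needed so that both coefficients remain nonzero and Theorem~\ref{T1.1} is applicable. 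The substantive work is all carried out in the earlier sections.
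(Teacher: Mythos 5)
Your proposal is correct and follows exactly the route the paper intends: Theorem~\ref{T4.1} is presented there as a direct restatement of Theorem~\ref{T1.1} via the congruence $g_{q^{2i}-q-1,q}({\tt x})\equiv(i-1){\tt x}^{q^2-q-1}-i{\tt x}^{q-2}\pmod{{\tt x}^{q^2}-{\tt x}}$, and your normalization by $-i^{-1}$ with $t=(1-i)/i$ and the resulting dictionary $t=1\leftrightarrow 2i\equiv1$, $t=-3\leftrightarrow 2i\equiv-1$, $t=3\leftrightarrow 4i\equiv1\pmod p$ is precisely the intended (and correct) bookkeeping.
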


%%%%%%%%%%%%%%%%%%%%%%%%%%%%%%%%%%%%%%%%%%%%%%%%%%%%%%%%

\end{document}